\def\eop{\hfill$\square$}
\newtheorem{theorem}{Theorem}[section]
\newtheorem{lemma}{Lemma}[section]
\numberwithin{equation}{section}
\begin{document}

\title{Period two implies chaos for a class of ODEs: a dynamical system approach}

\begin{author}
{Marina Pireddu \footnote{Department of Mathematics for the Decisions, University of Florence}} 
\end{author}

\date{}
\maketitle

\begin{abstract}
\noindent The aim of this note is to set in the field of dynamical systems a recent theorem by Obersnel and Omari in 
\cite{ObOm-07} about the presence of periodic solutions of all periods for a class of scalar
time-periodic first order differential equations without uniqueness, provided a subharmonic solution (and 
thus, for instance, a solution of period two) does exist.
Indeed, making use of the Bebutov flow, we try to clarify in what sense the term ``chaos'' has to be understood and 
which dynamical features can be inferred for the system under analysis.  
\end{abstract}

\section{Introduction and Motivation}\label{sec-1}
In the recent papers \cite{ObOm-04,ObOm-07} Obersnel and Omari and in \cite{DCObOm-06} De Coster, Obersnel and Omari, using upper and lower solutions techniques, give a complete description of the structure of the set of solutions of the scalar time-periodic first order differential equation
\begin{equation}\label{eq-fo}
\dot x = f(t,x),
\end{equation}
where $f: {\mathbb R} \times {\mathbb R}\to {\mathbb R}$ satisfies the $L^1$-Carath\'eodory conditions and is
$1$-periodic in the time-variable, that is, $f(t + 1,x) = f(t,x),\,\forall (t,x) \in {\mathbb R} \times {\mathbb R}.$
In particular, the authors show that the periodic solutions are assembled in mutually ordered connected components and the existence of subharmonic solutions of all periods for \eqref{eq-fo} is achieved, under the hypothesis that a subharmonic solution does exist.
In \cite{ObOm-06} the case of differential inclusions is studied as well, still via upper and lower solutions techniques.\\
Subsequently, the result on the existence of subharmonic solutions of all periods for \eqref{eq-fo} has been also reconsidered
in \cite{AnFuPa-07, Se-09}, employing different approaches. Indeed, in \cite{AnFuPa-07} Andres, F\"urst and Pastor give a proof in terms of multivalued maps, under the additional assumption of global existence for the solutions of \eqref{eq-fo}, while in \cite{Se-09} S\c edziwy exploits direct considerations, based on the geometry of the Euclidean plane.\\
The precise statement of the result proven in \cite{ObOm-07} reads as follows:

\begin{theorem}\label{th-or}
Let $f: {\mathbb R} \times {\mathbb R}\to {\mathbb R}$ be $1$-periodic in the first variable and satisfy the $L^1$-Carath\'eodory conditions on $[0,1]\times\mathbb R.$ If equation \eqref{eq-fo} admits a subharmonic solution of order $m >1,$ then, for every integer $n\ge 1,$ there exists a subharmonic solution of \eqref{eq-fo} of order $n.$
\end{theorem}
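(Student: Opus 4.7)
\medskip\noindent\textbf{Proof plan.}
The plan is to recast the statement dynamically by working with the Poincar\'e multimap
\[
P(\xi) := \bigl\{ y(1) : y \text{ is a solution of \eqref{eq-fo} with } y(0)=\xi \bigr\},
\]
equivalently, the time-one map of the Bebutov translation flow on the space of solutions of \eqref{eq-fo}. By the $L^{1}$-Carath\'eodory hypothesis together with the Kneser--Hukuhara funnel theorem, $P$ is upper semicontinuous with nonempty compact values, each of which is a closed interval of $\mathbb{R}$; and by the $1$-periodicity of $f$ in time, a subharmonic solution of \eqref{eq-fo} of order $k$ is exactly a $P$-periodic point of minimal period $k$. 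The theorem therefore reduces to a purely combinatorial statement: an upper semicontinuous interval-valued self-map of $\mathbb{R}$ that admits a periodic orbit of minimal length $m>1$ admits periodic orbits of every minimal length $n\ge1$.

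The first step is to handle the case $n=1$, i.e.\ to produce a harmonic (genuine $1$-periodic) solution. Given a $P$-cycle $\xi_0,\dots,\xi_{m-1}$ with $m>1$, there must exist a pair of consecutive indices with $\xi_i\neq\xi_{i+1}$; say $\xi_i<\xi_{i+1}$. On the interval $I=[\xi_i,\xi_{i+1}]$ the sets $A=\{x\in I : \min P(x)\le x\}$ and $B=\{x\in I : \max P(x)\ge x\}$ are closed by upper semicontinuity, contain $\xi_{i+1}$ and $\xi_i$ respectively, and, through the interval-valuedness of $P$, intersect in a point $c$ satisfying $c\in P(c)$ by a Bolzano-type argument. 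This $c$ is the required harmonic solution. (Depending on how the minimality $m>1$ is used, one may need to perform the argument on a suitable iterate $P^{k}$ with $k\le m$, but the substance of the construction is unchanged.)

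For an arbitrary $n\ge 2$ the plan is to splice together, exploiting the lack of uniqueness in an essential way, a short excursion along the given $m$-cycle with an appropriate number of visits near the harmonic solution $c$. At any value common to two solutions of \eqref{eq-fo} one may jump from one trajectory to the other and remain a legitimate solution; combined with the interval-valuedness of $P$, this provides, via intermediate-value arguments, the flexibility required to construct $P$-cycles of every prescribed length. The main obstacle I foresee is the minimality of the constructed period, that is, the need to ensure that the spliced orbit does not collapse to a proper divisor of $n$. To control this I would invoke the mutual-ordering properties of subharmonic solutions established in \cite{ObOm-07}, which forbid two subharmonics of distinct orders from coinciding on a non-trivial time-interval and therefore permit the splicing parameters to be chosen so that the resulting orbit has minimal period exactly~$n$.
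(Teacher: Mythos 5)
Your proposal has genuine gaps, and the most serious one is foundational. You build everything on the Poincar\'e multimap $P(\xi)=\{y(1): y(0)=\xi\}$ being an upper semicontinuous map with nonempty compact interval values on all of $\mathbb R$ (or at least on $[\xi_i,\xi_{i+1}]$). Under the bare $L^1$-Carath\'eodory hypothesis solutions need not exist up to time $1$ from an arbitrary initial value, so $P$ may fail to be everywhere defined; this is exactly why the paper declines to use the multivalued Poincar\'e operator (it notes that Andres--F\"urst--Pastor \cite{AnFuPa-07} must add global existence as an extra hypothesis) and works instead directly with solutions via the Bebutov flow. The paper's route is elementary: from a solution $x$ of minimal period $2$ it sets $y(t)=x(t+1)$, finds by Bolzano a crossing $\xi$ with $x(\xi)=y(\xi)$ (hence crossings at every $\xi+i$), and then, since switching between $x$ and $y$ at these common points yields legitimate solutions, assigns to each two-sided binary sequence $\eta$ a solution $w_\eta$; periodic $\eta$ of minimal period $n$ gives a subharmonic of order exactly $n$ because $w_\eta(s_1+i)$ takes two distinct prescribed values $p\ne q$ according to $\eta_i$. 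A separate Cancellation Lemma reduces order $m\ge 3$ to order $2$ by excising loops between coinciding values and finding one more crossing. None of this needs $P$ at all.

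Even granting global existence, your central step --- producing cycles of every minimal length $n\ge 2$ by ``splicing a short excursion along the $m$-cycle with visits near the harmonic solution'' --- is not a proof but a declaration of intent: no construction is given, and the minimality of the period is controlled only by an appeal to the mutual-ordering results of \cite{ObOm-07}, which is close to assuming the theorem you are reproving. There are also two smaller slips. First, if the given subharmonic of order $m$ happens to satisfy $u(i)=u(0)$ for all integers $i$, your cycle $\xi_0,\dots,\xi_{m-1}$ is constant and has no pair of distinct consecutive entries, so your $n=1$ argument does not start (the paper avoids this by locating the non-$1$-periodicity at an arbitrary real time $t_1$, not at an integer section). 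Second, $\xi_i<\xi_{i+1}$ gives $\xi_i\in B$ but does not give $\xi_{i+1}\in A$; you should instead take the maximum and minimum of the cycle as the endpoints of $I$. These last two points are fixable, but the missing $n\ge 2$ construction and the reliance on global existence are not repairs --- they are the substance of the theorem.
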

 
\noindent 
We recall that in \cite{ObOm-07} it was also shown that the set of all the subharmonic solutions of \eqref{eq-fo} of order $n$ has dimension at least $n$ as a subset of $L^{\infty}(\mathbb R).$ The treatment of such topic is however out of the scope of the present paper, as it doesn't fall within our dynamical approach. Of course, in the statement above the more general case of a map $f$ which is $T$-periodic in the time-variable, for some $T>0,$ could be considered as well. In such a framework, one would assume the existence of a subharmonic solution of order $mT,$ for a certain $m>1,$ and obtain the existence of $nT$-periodic solutions, for every $n\ge 1.$ For the sake of simplicity, we confine ourselves to the setting considered in \cite{ObOm-07}, presenting an elementary verification of Theorem \ref{th-or} based on connectivity. The arguments we employ bear resemblance to the work by S\c edziwy \cite{Se-09}: our proof has however been obtained independently and we present it in full details because it is along the course of such proof that the language and the notations for the subsequent dynamical analysis of the system generated by the solutions of \eqref{eq-fo} are introduced.\\
Our contribution is indeed twofold. On the one hand we propose an alternative dynamical approach to the study of the system under consideration. Namely, since the uniqueness of the solutions is missing, instead of considering the multivalued Poincar\'e operator as in \cite{AnFuPa-07}, we introduce the Bebutov flow, which is defined on a function space. This allows to study the case of differential inclusions as well, without the additional hypothesis of global existence for the solutions of \eqref{eq-fo}. On the other hand, we try to explain which are the chaotic features that can be inferred for the system generated by the solutions of \eqref{eq-fo}. In particular, we are able to show the positivity of the topological entropy and the presence of chaos in the sense of Li-Yorke and Devaney.\\
The paper is organized as follows. In Section \ref{sec-mr} we prove Theorem \ref{th-or}, by splitting its verification into Theorem \ref{th-fs} and the Cancellation Lemma \ref{lem-cl}. More precisely, in Theorem \ref{th-fs} we show that, whenever a subharmonic solution of period two exists for \eqref{eq-fo}, then the presence of periodic solutions of all periods follows. Lemma \ref{lem-cl} states instead that it is always possible to confine ourselves to the case of Theorem \ref{th-fs}, in the sense that, whenever Problem \eqref{eq-fo} admits a subharmonic solution of period $m\ge 3,$ then it also has a solution of period $2.$ As mentioned above, in the proof of Theorem \ref{th-fs} we lay the foundations for the study of the system generated by the solutions of \eqref{eq-fo} in Section \ref{sec-ch}. Here the term ``chaos'' appearing in the title is better specified and the dynamical features of the solutions of Problem \eqref{eq-fo} are more deeply analyzed. In particular we introduce the main tools from the theory of Dynamical Systems we need along the proof of Theorem \ref{th-ch}, where we show that the Bebutov flow and the Bernoulli shift are conjugate. According to \cite{KiSt-89}, this fact has several consequences for the dynamical system generated by the solutions of \eqref{eq-fo}, since it turns out to be Li-Yorke chaotic, sensitive with respect to initial data, topologically transitive, the set of the periodic solutions is dense therein and the topological entropy is positive.\\
The definition of such concepts can be found in Section \ref{sec-ch}, too.

\section{Proof of Theorem \ref{th-or}}\label{sec-mr}

As explained in the Introduction, our proof of Theorem \ref{th-or} depends on two steps. The first one is the verification of Theorem \ref{th-fs} below, which asserts that, whenever a subharmonic solution of period two exists for \eqref{eq-fo}, then the presence of periodic solutions of all periods follows. This is the main part of the proof of Theorem \ref{th-or} and it is here that the language and the notations then used in Section \ref{sec-ch} are introduced. The second step consists instead in showing that it is always possible to confine ourselves with the case considered in Theorem \ref{th-fs}. This is the content of the Cancellation Lemma \ref{lem-cl}, which states that whenever Problem \eqref{eq-fo} admits a subharmonic solution of period $m\ge 3,$ then it also admits a solution of period $2.$ \\
Theorem \ref{th-fs} and Lemma \ref{lem-cl}, as well as their proofs, are presented hereinafter.

\smallskip

\begin{theorem}\label{th-fs}
Let $f: {\mathbb R} \times {\mathbb R}\to {\mathbb R}$ be continuous and $1$-periodic in the time-variable. If equation \eqref{eq-fo} admits a subharmonic solution of order $2,$ then, for every $n\ge 1,$ there exists a subharmonic solution of \eqref{eq-fo} of order $n.$
\end{theorem}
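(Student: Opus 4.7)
The plan is as follows. Starting from a subharmonic $u$ of order $2$, I form its time-shift $v(t) := u(t+1)$, which by the $1$-periodicity of $f$ in $t$ is again a solution of \eqref{eq-fo}; and $v \not\equiv u$ because $u$ has minimal period $2$. The key algebraic observation is that $w(t) := u(t) - v(t)$ satisfies $w(t+1) = -w(t)$, so by the intermediate value theorem there exists $\tau \in [0,1]$ with $u(\tau) = v(\tau)$. Geometrically, the two $2$-translates of $u$ cross at least once over a unit interval.

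Using this crossing I would construct four solutions of \eqref{eq-fo} on $[0,1]$ by possibly switching from $u$ to $v$ (or vice versa) at time $\tau$; call them $\alpha,\beta,\gamma,\delta$. Setting $a := u(0)$ and $b := u(1) = v(0)$, the four pieces realise the four possible transitions between the levels $a$ and $b$ over one unit of time: $a \mapsto b$ via $\alpha = u$, $b \mapsto a$ via $\beta = v$, $a \mapsto a$ via $\gamma$ (take $u$ on $[0,\tau]$, then $v$ on $[\tau,1]$), and $b \mapsto b$ via $\delta$ (take $v$ then $u$). Continuity of $f$ together with the agreement $u(\tau)=v(\tau)$ makes each concatenation a genuine $C^{1}$ solution of \eqref{eq-fo} on $[0,1]$; the construction is nontrivial only because the standing hypothesis forces non-uniqueness of the initial value problem, otherwise a period-$2$ subharmonic could not exist for a scalar first-order equation.

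The production of subharmonics of arbitrary order now becomes a coding argument. For a sequence $s = (s_k)_{k\in\mathbb{Z}} \in \{a,b\}^{\mathbb{Z}}$ I build a function $x_s : \mathbb{R} \to \mathbb{R}$ by selecting, on each interval $[k,k+1]$, the unique piece among $\{\alpha,\beta,\gamma,\delta\}$ that carries the value $s_k$ at time $k$ to the value $s_{k+1}$ at time $k+1$, after translating time by $k$ and invoking the $1$-periodicity of $f$. By construction $x_s$ solves \eqref{eq-fo} on $\mathbb{R}$. Given $n\ge 1$, choosing $s$ to be the periodic repetition of a word of minimal period $n$ in $\{a,b\}^n$ (e.g.\ $(a,\underbrace{b,\ldots,b}_{n-1})$ for $n\ge 2$, a constant word for $n=1$) yields a periodic solution of period $n$.

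The point that needs care, and which I view as the only real obstacle, is ensuring that $x_s$ has minimal period exactly $n$ rather than some smaller divisor. The saving mechanism is that the symbols are recovered from the solution by sampling at the integers: since $a\neq b$, the identity $x_s(k) = s_k$ implies that any period of $x_s$ is also a period of $s$, so minimal period is preserved under the coding. This completes the proof of Theorem \ref{th-fs} and, as a by-product, sets up the correspondence between binary sequences and solutions of \eqref{eq-fo} that will underlie the symbolic conjugacy with the Bernoulli shift in Section \ref{sec-ch}.
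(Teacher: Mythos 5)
Your route is essentially the paper's: shift the given order-$2$ solution by one period to get a second solution, locate a crossing by the intermediate value theorem, and then build solutions coding arbitrary two-sided binary sequences by switching branches at the crossing, reading off minimal periods from the symbol sequence. The packaging via the four transition pieces $\alpha,\beta,\gamma,\delta$ glued at integer times is a harmless reformulation of the paper's labelling of the intervals $(\xi+i,\xi+i+1)$ by ``up''/``down'', and it has the small advantage of sidestepping the case distinction the paper makes when $u$ and $v$ cross several times in one period.

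There is, however, one step that can fail as written: you set $a:=u(0)$, $b:=u(1)$ and later rely on $a\neq b$, both to make the piece realizing a given transition $(s_k,s_{k+1})$ unique and, crucially, to recover the symbol sequence from the solution by sampling at the integers (your minimal-period argument). But minimality of the period $2$ only guarantees that $u(t^*)\neq u(t^*+1)$ for \emph{some} $t^*$; it does not exclude $u(0)=u(1)$ (e.g.\ a $2$-periodic solution vanishing at all integers but with $u(1/2)\neq u(3/2)$ is not ruled out by your hypotheses). In that degenerate case all four of your pieces begin and end at the same level, the coding by endpoint values collapses, and the claim ``any period of $x_s$ is a period of $s$'' no longer follows. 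The repair is immediate and is exactly what the paper does by choosing the base point $t_1$ with $x(t_1+1)\neq x(t_1)$ before invoking Bolzano: translate the time origin to such a $t^*$ (the translated $f$ is still continuous and $1$-periodic), so that $a\neq b$ and your $\tau$ lies strictly inside the unit interval. With that normalization inserted, your argument is complete.
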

\begin{proof}
Let $x(t)$ be a solution of \eqref{eq-fo} of period two defined for $t\ge t_0,$ for some $t_0\in\mathbb R.$ Then $x(t+2)=x(t),\,\forall t\ge t_0$ and there exists $t_1\ge t_0$ such that $x(t_1+1)\ne x(t_1).$ Without loss of generality we can assume $x(t_1+1)>x(t_1),$ since otherwise it would be sufficient to consider $t_1+1$ in place of $t_1.$ Defining $y(t):=x(t+1),$ we find $y(t_1)=x(t_1+1)>x(t_1)$ and $y(t_1+1)=x(t_1+2)=x(t_1)<x(t_1+1).$ Thus, by Bolzano theorem there exists $\xi\in (t_1,t_1+1)$ such that $y(\xi)=x(\xi).$ Hence, $y(\xi+1)=x(\xi+2)=x(\xi)=y(\xi)=x(\xi+1)$ and, more generally, $y(\xi+n)=x(\xi+n),\,\forall n\in\mathbb N.$ Just to fix ideas, we start by supposing that $\xi$ is the only instant in $(t_1,t_1+1)$ where $x(\cdot)$ and $y(\cdot)$ coincide. By such assumption, on each interval of the form $(\xi+n,\xi+n+1),$ with $n\in\mathbb N,$ it holds that $x(t)>y(t)$ or $x(t)<y(t)$ and the situation gets inverted when moving from $(\xi+n,\xi+n+1)$ to $(\xi+n+1,\xi+n+2)$ (more precisely, $x(t)>y(t)$ on the intervals of the kind $(\xi+2m,\xi+2m+1)$ and $y(t)>x(t)$ on $(\xi+2n+1,\xi+2n+2),$ for $m,n\in\mathbb N$). Thus, in correspondence to every interval of the form $(\xi+n,\xi+n+1),$ we can choose between staying ``up'' or ``down'' by suitably selecting $x(t)$ or $y(t).$ In particular we associate to any such interval the label ``$0$'' when we stay ``up'' and the label ``$1$'' when we stay ``down''. This procedure can obviously be adopted also on the intervals $(\xi+i,\xi+i+1)$ with $i$ negative integer, by extending $x(\cdot)$ and $y(\cdot)$ to the whole real line by $2$-periodicity, thanks to the fact that $f$ is $1$-periodic in the time-variable. In such way we are led to work with the two-sided sequences on two symbols $\eta=(\eta_i)_{i\in\mathbb Z},$ with $\eta_i\in\{0,1\},\,\forall i\in\mathbb Z.$ For any $t\in\mathbb R\setminus\{\xi+i:i\in\mathbb Z\},$ we call $x_0(t)$ the one between $x(t)$ or $y(t)$ that stays ``up'' and $x_1(t)$ the one that stays ``down''. Hence, to any sequence $\eta=(\eta_i)_{i\in\mathbb Z}\in\{0,1\}^{\mathbb Z}$ it is possible to associate the function 
\begin{equation}\label{eq-eta}
w_{\eta}:\mathbb R\to\mathbb R,\,\quad w_{\eta}(t):=x_{\eta_i}(t), \mbox{ for } \xi+i\le t\le \xi+i+1.
\end{equation}
It is easy to check that, setting $p:=x(t_1),\,q:=y(t_1),\,s_1:=t_1+1$ and recalling that $p<q,$ then $w(\cdot)=w_{\eta}(\cdot)$ is a solution of \eqref{eq-fo} which satisfies $w(s_1+i)=q$ if $\eta_i=0$ and $w(s_1+i)=p$ if $\eta_i=1.$ Moreover, it is immediate to see that if $\eta=(\eta_i)_i$ is a periodic sequence of some period $l>1,$ that is, $\eta_{i+l}=\eta_i,\,\forall i\in\mathbb Z,$ then the corresponding solution $w_{\eta}(t)=(x_{\eta_i}(t))_{i\in\mathbb Z}$ is periodic of the same period, i.e. $x_{\eta_i}(t+l)=x_{\eta_i}(t),\,\forall t\in\mathbb R.$ In this way, we have proved the existence of subharmonic solutions of each period for \eqref{eq-fo}. The thesis is achieved.\\
In the more general case in which $x(\cdot)$ and $y(\cdot)$ meet several times in $(t_1,t_1+1),$ let $\xi$ be the first instant in $(t_1,t_1+1)$ such that $x(\xi)=y(\xi).$ Then the same proof presented above still works, with the only difference that the label to assign to the generic interval $(\xi+i,\xi+i+1)$ is now decided by looking at the value that the maps $x(\cdot)$ and $y(\cdot)$ assume in $t_1+i+1.$ Indeed,
$t_1+i+1\in(\xi+i,\xi+i+1),$ for any $i\in\mathbb Z,$ and it holds that $x(t_1+i+1)>y(t_1+i+1)$ when $i$ is even, while $y(t_1+i+1)>x(t_1+i+1)$ when $i$ is odd.
Moreover, for any $t\in\mathbb R\setminus\{\xi+k:k\in\mathbb Z\},$ we have that $t\in (\xi+i,\xi+i+1),$ for a unique $i\in\mathbb Z.$ Then
we call $x_0(t)$ the one between $x(t)$ or $y(t)$ that stays ``up'' in $t_1+i+1$ and $x_1(t)$ the one that stays ``down'' in $t_1+i+1.$ In this way, to any sequence $\eta=(\eta_i)_{i\in\mathbb Z}\in\{0,1\}^{\mathbb Z}$ it is possible to associate the function $w_{\eta}$ as in $\eqref{eq-eta}$ and conclude as before\footnote{Obviously the case in which $x(\cdot)$ and $y(\cdot)$ meet several times in $(t_1,t_1+1)$ could be more deeply analyzed.  
Indeed, when there are $m>1$ intersections $\xi_1,\dots,\xi_m$ between $x(\cdot)$ and $y(\cdot)$ in $(t_1,t_1+1),$ it is possible to work with the sequences on a higher number of symbols, that can be assigned as follows: since in any interval of the form $(\xi_k,\xi_{k+1}),$ with $k=1,\dots,m,$ (where we identify $\xi_{m+1}$ with $\xi_1+1$) we can choose between staying ``up'' or ``down'' by suitably selecting $x(t)$ or $y(t),$ we associate to $(\xi_k,\xi_{k+1})$ the label ``$0$'' when we stay ``up'' and the label ``$1$'' when we stay ``down''. Hence, to describe our $m$ choices on the interval $[\xi_1,\xi_1+1],$ we use a string of $m$ symbols, in which any element can be $0$ or $1.$ Such $2^m$ strings can be ordered lexicographically and each of them may be identified with the integer between $0$ and $2^m-1$ that denotes its position in this order decreased by one. In such way we are led to work with the two-sided sequences on $2^m$ symbols. This allows to infer stronger consequences from a dynamical point of view, as it permits to prove the conjugacy between the space generated by the solutions of \eqref{eq-fo} and the Bernoulli shift on $2^m$ symbols, instead of considering the shift on just two symbols (see Section \ref{sec-ch} for more details). Actually, since in \cite{ObOm-07} the existence of continua of periodic solutions is proven, the presence of chaos on infinite symbols could be shown as well. However, since all the relevant chaotic features are present even with a finite number of symbols, we confine ourselves to the easier framework.}.
\end{proof}
\eop

\begin{lemma}[Cancellation Lemma]\label{lem-cl}
If Problem \eqref{eq-fo} admits a subharmonic solution of period $m,$ with $m\ge 3,$ which is not $1$-periodic, then it also admits a solution of period two, which is not $1$-periodic.
\end{lemma}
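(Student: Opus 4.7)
The plan is to iterate a reduction step on the period by one. Concretely, I will prove that from a $k$-periodic, non-1-periodic solution of \eqref{eq-fo} with $k\ge 3$ one can construct a $(k-1)$-periodic, non-1-periodic solution. Applying this reduction $m-2$ times to the given subharmonic $x$ of order $m$ yields the desired 2-periodic solution which is not 1-periodic.

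For a single reduction step, let $y$ denote the $k$-periodic non-1-periodic solution at hand and set $\phi(t):=y(t+1)-y(t)$. The function $\phi$ is $k$-periodic, continuous, has zero mean over a period, and is not identically zero (since $y$ is not 1-periodic); hence $\phi$ has at least one zero. For any zero $a$ of $\phi$ I define
\[
\tilde y_a(t):=y(t),\qquad t\in[a+1,a+k],
\]
and extend $\tilde y_a$ to $\mathbb R$ by $(k-1)$-periodicity. Continuity at the gluing points follows from the identity $\tilde y_a(a+k)=y(a+k)=y(a)=y(a+1)=\tilde y_a(a+1)$, which uses the $k$-periodicity of $y$ and $\phi(a)=0$. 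The fact that $\tilde y_a$ remains a Carath\'eodory solution of \eqref{eq-fo} then follows from the 1-periodicity of $f$ in its first variable, which makes $f$ also $(k-1)$-periodic in time.

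A direct check shows that $\tilde y_a$ is 1-periodic if and only if $\phi$ vanishes identically on $[a+1,a+k-1]$. The key point is therefore to exhibit a zero $a$ of $\phi$ for which $\phi\not\equiv 0$ on this length-$(k-2)$ interval. Arguing by contradiction, suppose $\phi\equiv 0$ on $[a+1,a+k-1]$ for every $a$ in the zero set $Z_\phi:=\phi^{-1}(0)$. Letting $N:=\{t\in\mathbb R:\phi(t)\ne 0\}$, this translates into the implication $t\in N \Rightarrow [t-k+1,t-1]\subseteq N$, since any zero $s$ of $\phi$ in $[t-k+1,t-1]$ would place $t$ inside the forbidden zero interval $[s+1,s+k-1]$. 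Iterating this implication starting from a point $t_0\in N$ produces, after $n$ steps, an interval of length $n(k-2)$ contained in $N$. As soon as this length reaches $k$, the $k$-periodicity of $N$ forces $N=\mathbb R$; but then $\phi$ is continuous and nowhere zero, hence of constant sign, contradicting $\int_0^k\phi=0$.

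The main obstacle lies in this last claim: ruling out the pathological case in which every zero of $\phi$ sits at the left endpoint of a long vanishing interval of $\phi$. The iteration argument sketched above handles this by exploiting the gap between the forced vanishing length $k-2$ and the period $k$, enlarging $N$ step by step until it must cover an entire period.
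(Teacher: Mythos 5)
Your argument is correct, and while it shares with the paper the basic mechanism of splicing a solution at a self-coincidence $u(s)=u(s+j)$ with $j$ a positive integer (the procedure \eqref{eq-join}), the way you locate and exploit such coincidences is genuinely different. The paper first removes all repetitions among the values $u(t^*),u(t^*+1),\dots,u(t^*+m-1)$ so that they become pairwise distinct, and then compares $u$ with its shift $v(t)=u(t+m-1)$ on the unit interval based at the time $\bar t$ where the minimum of these values is attained; Bolzano's theorem yields a crossing $u(\tilde t)=u(\tilde t+m-1)$ and one final splice concludes. You instead peel off exactly one unit of period at a time, producing the required coincidence $y(a+1)=y(a)$ from the fact that the increment $\phi(t)=y(t+1)-y(t)$ is continuous with zero mean over a period. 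The real content of your proof is the propagation argument on the zero set of $\phi$: it guarantees that some zero $a$ can be chosen so that the spliced solution is not $1$-periodic, i.e.\ that the reduction never collapses all the way down to period one. This is a point on which the paper is essentially silent, both in its preliminary ``pairwise distinct values'' normalization (where repeated splicing could in principle destroy non-$1$-periodicity) and in the final splice, whose resulting period is asserted rather than computed; your version is more careful exactly where care is needed, and in addition dispenses with that normalization altogether. The price you pay is the $(m-2)$-step iteration and the combinatorial lemma on $Z_\phi$, in place of the paper's single application of Bolzano's theorem at the minimizing time.
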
 
\begin{proof}
Let $u(t)$ be a solution of \eqref{eq-fo} defined for $t\ge t_0,$ for a certain $t_0\in\mathbb R,$ such that there exists $m\ge 3$ with $u(t+m)=u(t),\,\forall t\in\mathbb R,$ and $u(t^*+1)\ne u(t^*),$ for some $t^*\in\mathbb R.$ Then we claim that we can suppose the set $\{u(t^*),u(t^*+1),\dots,u(t^*+m-1)\}$ to be composed by pairwise distinct terms. Indeed, if this were not the case, we could join two of the coinciding elements $u(t^*+j)$ and $u(t^*+k),$ for some $j<k\in\{0,\dots,m-1\},$ in order to obtain an $m-(k-j)$-periodic solution
$\widetilde u$ of \eqref{eq-fo} defined as 
\begin{equation}\label{eq-join}
\widetilde u(t):=\; \left\{
\begin{array}{lll}
u(t)\quad &t\le\,t^*+j\\
u(t+k-j)\quad &t\ge\,t^*+j\,.\\ 
\end{array}
\right.
\end{equation}

\noindent
Since this can be done for any couple of coinciding elements, the claim is true for the solution $\widetilde{\widetilde u}$ of \eqref{eq-fo} so obtained, that we still denote by $u.$\\
If $u$ has period two, the lemma is proved. Otherwise, let us call $\bar t$ the element among $t^*,t^*+1,\dots,t^*+m-1$ such that $u(\bar t)=\min \{u(t^*+i):0\le i\le m-1\},$ so that, setting $v(t):=u(t+m-1),$ we find $v(\bar t)=u(\bar t+m-1)>u(\bar t).$ On the other hand, $v(\bar t+1)=u(\bar t)<u(\bar t+1).$ Hence, by Bolzano theorem there exists $\tilde t\in (\bar t,\bar t+1)$ with $u(\tilde t)=v(\tilde t)=u(\tilde t+m-1)$ and thus we can obtain a $2$-periodic solution of \eqref{eq-fo} with the procedure in \eqref{eq-join}.
The thesis is so achieved.
\end{proof}
\eop

\section{Chaotic Dynamics}\label{sec-ch}

Before stating our main result on chaotic dynamics, i.e. Theorem \ref{th-ch}, we recall the fundamental tools from the theory of dynamical systems we are going to use along its proof. In particular, at first we introduce the Bebutov flow \cite{BhSz-70,Si-75} and then we collect some general definitions and facts about chaotic dynamics. \\
We denote by $\mathcal C$ the set of the continuous functions $f:\mathbb R\to\mathbb R,$ that is,
\begin{equation}\label{eq-c}
\mathcal C=\mathcal C(\mathbb R).
\end{equation}
On this space we define a metric $\rho$ as follows: given an integer $m\ge 1$ and $I_m:=[-m,m],$ for $f,g\in \mathcal C$ we set 
$$\vartheta_m(f,g):=\max\{|f(t)-g(t)|:t\in I_m\},$$
$$\rho_m(f,g):=\frac{\vartheta_m(f,g)}{1+\vartheta_m(f,g)}$$ and
\begin{equation}\label{eq-rho} 
\rho(f,g):=\sum_{m=1}^{\infty}\frac{\rho_m(f,g)}{2^m}.
\end{equation}
One may verify that $\rho$ is indeed a metric on $\mathcal C$ and that with this choice $\mathcal C$ is complete. Moreover, the convergence induced by $\rho$ on $\mathcal C$ is the uniform convergence on compact sets, that is, if $(f_n)_n$ is a sequence in $\mathcal C,$ then $\rho(f_n,f)\to 0$ as $n\to\infty,$ for $f\in\mathcal C,$ if and only if $f_n(t)\to f(t)$ uniformly on compact subsets of $\mathbb R$ \cite{BhSz-70}. \\
On the metric space $(\mathcal C,\rho),$ we define the \textit{Bebutov dynamical system} (or \textit{shift dynamical system} \cite{Si-75}) $\pi:\mathcal C\times \mathbb R\to \mathcal C$ as 
$$\pi(f,t)=g,$$
where $$g(s)=f(t+s),\,\forall s\in\mathbb R.$$
The verification that $\pi$ is a dynamical system can be found in \cite{BhSz-70}. When $s$ is fixed, it is also possible to define the continuous function
\begin{equation}\label{eq-psi}
\psi_s:\mathcal C\to \mathcal C, \quad f(\cdot)\mapsto f(\cdot+s).
\end{equation}
This is the map we will consider, for $s=1$ and restricted to the set $W$ in \eqref{eq-w}, in Theorem \ref{th-ch}.

\smallskip

\noindent
Given an integer $m\ge 2,$ we denote by
$\Sigma_m:=\{0,\dots,m-1\}^{\mathbb Z}$ the set of two-sided sequences of $m$ symbols. Such compact space is usually endowed with the distance
\begin{equation}\label{eq-sd}
\hat d(\textbf{s}', \textbf{s}'') := \sum_{i\in {\mathbb Z}} \frac{|s'_i - s''_i|}{m^{|i| + 1}}\,,\quad
\mbox{ for }\; \textbf{s}'=(s'_i)_{i\in {\mathbb Z}}\,,\;
\textbf{s}''=(s''_i)_{i\in {\mathbb Z}}.
\end{equation}
Here we define the \textit{two-sided Bernoulli shift} on $m$ symbols\footnote{We have chosen to present this definition in the generic case of $m\ge 2$ symbols because of the discussion along the previous footnote. However, in view of Theorem \ref{th-ch}, from now on we will confine ourselves to the special framework of two symbols. In particular, this holds true for the definition of chaos in the coin-tossing sense, that we directly give in the less general version, but that could as well be formulated for an arbitrary number of symbols.} $\sigma: \Sigma_m\to \Sigma_m$ as $\sigma((s_i)_i):=(s_{i+1})_i,$ $\forall i\in\mathbb Z.$ Observe that, by the choice of the metric, this map is continuous (and hence a homeomorphism).\\
Given two continuous self-maps $f:Y\to Y$ and $g:Z\to Z$ of the metric spaces $Y$ and $Z,$ we say that $f$ and $g$ are \textit{topologically conjugate}\footnote{Notice that the concept of topological conjugacy can also be defined in the more general setting of topological spaces. However, in order to make the presentation more uniform, we have decided to introduce all the notions in the context of metric spaces. For instance, this remark applies to the definition of coin-tossing chaos.} if there exists a homeomorphism $\phi:Y\to Z$ that makes the diagram
\begin{equation}\label{diag-comm}
\begin{diagram}
\node{{Y}} \arrow{e,t}{f} \arrow{s,l}{\phi}
      \node{{Y}} \arrow{s,r}{\phi} \\
\node{Z} \arrow{e,b}{g}
   \node{Z}
\end{diagram}
\end{equation}
\textit{commute}, i.e. such that $\phi\circ f=g\circ\phi.$ Any such map $\phi$ is named \textit{conjugacy}.\\
A self-map $f: X \to X$
of the metric space $X$ is called
\textit{chaotic in the sense of coin-tossing} \cite{KiSt-89}
if there exist two nonempty disjoint
compact sets
$${\mathcal K}_0,\, {\mathcal K}_1\subseteq X,$$
such that, for each two-sided sequence $(s_i)_{i\in {\mathbb Z}}
\in  \Sigma_2,$ there exists a corresponding sequence
$(x_i)_{i\in {\mathbb Z}}\in {X}^{\mathbb Z}$ such that
\begin{equation}\label{eq-ch}
x_i \,\in\, {\mathcal K}_{s_i}\;\;\mbox{ and }\;\, x_{i+1} =
f(x_i),\;\; \forall\, i\in {\mathbb Z}.
\end{equation}
Given a self-map $f:X\to X$ of the metric space $X,$
we say that $S\subseteq X$ is a \textit{scrambled set for $f$}
if for any $x,y\in S,$ with $x\ne y,$ it holds that
$$ \liminf_{n\to\infty}d_X(f^n(x), f^n(y))=0 \quad \mbox{and} \quad \limsup_{n\to\infty}d_X(f^n(x), f^n(y))>0.$$
If the set $S$ is uncountable, we say that $f$ is \textit{chaotic in the sense of Li-Yorke}.\\
Finally, a self-map $f:X\to X$ of the infinite metric space $X$ is called \textit{chaotic in the sense of Devaney} if:
\begin{itemize}
\item  $f$ is \textit{topologically transitive}, i.e. for any
couple of nonempty open subsets $U,\,V$ of $X$ there exists an
integer $n\ge 1$ such that $U\cap f^n(V)\ne\emptyset\,;$ 
\item the set of the periodic points for $f$ is dense in $X$ \footnote{In the original definition of Devaney chaos \cite{De-89}, it was also required the map $f$ to be
\textit{sensitive with respect to initial data on} $X,$ i.e.
there exists $\delta > 0$ such that for any $x\in X$ there is a
sequence $(x_i)_{i\in\mathbb N}$ in $X$ such that
$x_i\to x$ when $i\to\infty$ and
for each $i\in\mathbb N$ there exists a positive integer
$m_i$ with $d_X(f^{m_i}(x_i), f^{m_i}(x))\ge\delta\,.$ This third condition has however been proved in \cite{BaBr-92} to be redundant for any continuous self-map of an infinite metric space, as it is implied by the previous two.}.
\end{itemize}

\smallskip

\noindent
In Theorem \ref{th-ch} below we are going to show that the map $\psi_1$ in \eqref{eq-psi} (restricted to a suitable compact set) and the Bernoulli shift $\sigma$ are conjugate. From this fact, many chaotic features of the Bernoulli system can be directly transferred to the Bebutov flow by using the next result from \cite{KiSt-89}, that we recall for the reader's convenience, rewritten in conformity with our notation and limited to what is indeed needed along the proof of Theorem \ref{th-ch}.

\begin{lemma}[Kirchgraber \& Stoffer {\cite[Proposition 1]{KiSt-89}}]\label{pr-ks}
Let $f:X\to X$ and $g:Y\to Y$ be homeomorphisms of the complete metric spaces $X$ and $Y,$ respectively. Assume that $f$ and $g$ are topologically conjugate and that $X$ is compact. Then, if $f$ is chaotic in the sense of coin-tossing, in the sense of Devaney, in the sense of Li-Yorke, so is $g.$
\end{lemma}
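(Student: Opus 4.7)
The plan is to exploit the conjugacy $\phi: X \to Y$ (with $\phi \circ f = g \circ \phi$, hence $\phi \circ f^n = g^n \circ \phi$ for every $n\in\mathbb Z$) to transfer each chaotic feature from $f$ to $g$. A first observation is that compactness of $X$ together with the continuity of $\phi$ gives that $Y=\phi(X)$ is compact, so both $\phi$ and $\phi^{-1}$ are automatically uniformly continuous. This uniform continuity is the tool that will carry the metric conditions (the $\liminf$/$\limsup$ of Li--Yorke, and the positive separation of the $\mathcal K_i$'s of coin-tossing) between the two spaces.

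For coin-tossing chaos, I would set $\mathcal K'_j := \phi(\mathcal K_j)$ for $j=0,1$. These are nonempty compact subsets of $Y$ by continuity of $\phi$, and disjoint by injectivity. Given a symbolic sequence $(s_i)_{i\in\mathbb Z}\in\Sigma_2$, pick $(x_i)_{i\in\mathbb Z}$ in $X$ realising \eqref{eq-ch} for $f$ and define $y_i:=\phi(x_i)$; then $y_i\in\mathcal K'_{s_i}$ and $y_{i+1}=\phi(f(x_i))=g(\phi(x_i))=g(y_i)$, which is exactly what is needed. For Devaney chaos, topological transitivity transfers by taking, for open sets $U,V\subseteq Y$, the preimages $\phi^{-1}(U),\phi^{-1}(V)$ (open in $X$), using the integer $n$ supplied by the transitivity of $f$, and then pushing the intersection forward by $\phi$ via $\phi\circ f^n=g^n\circ\phi$. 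Density of periodic points is even cleaner: if $f^k(p)=p$, then $g^k(\phi(p))=\phi(f^k(p))=\phi(p)$, so $\phi$ sends $\mathrm{Per}(f)$ bijectively onto $\mathrm{Per}(g)$, and a homeomorphism sends dense sets to dense sets.

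For Li--Yorke chaos, given an uncountable scrambled set $S\subseteq X$ for $f$, I would take $S':=\phi(S)$, which is uncountable by injectivity of $\phi$. For $x\ne y$ in $S$ set $x':=\phi(x),\,y':=\phi(y)$, so that $g^n(x')=\phi(f^n(x))$ and $g^n(y')=\phi(f^n(y))$. The step where I expect the actual work to lie is precisely here: one has to translate the statements $\liminf_n d_X(f^n(x),f^n(y))=0$ and $\limsup_n d_X(f^n(x),f^n(y))>0$ into the corresponding statements about $d_Y(g^n(x'),g^n(y'))$. The first follows from uniform continuity of $\phi$ on the compact space $X$ (small $d_X$ distances produce small $d_Y$ distances, so the $\liminf$ on $Y$ is $0$); the second uses uniform continuity of $\phi^{-1}$ on the compact space $Y$ (if the $\limsup$ on $Y$ were $0$, then also the $\limsup$ on $X$ would vanish, contradicting the scrambled condition). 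Thus $S'$ is an uncountable scrambled set for $g$.

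The key structural point is that compactness of $X$ (and hence of $Y$) upgrades the topological conjugacy to a \emph{uniform} one, which is exactly what is needed to preserve metric notions such as Li--Yorke chaos; the coin-tossing and Devaney parts, by contrast, rely only on the algebraic identity $\phi\circ f^n=g^n\circ\phi$ and the fact that $\phi$ is a bijective homeomorphism. Once these three transfers are in place, applying the lemma in the setting of Theorem \ref{th-ch} is automatic.
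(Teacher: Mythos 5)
Your argument is correct. Note, however, that the paper offers no proof of this statement to compare against: Lemma \ref{pr-ks} is quoted verbatim from Kirchgraber and Stoffer \cite[Proposition 1]{KiSt-89} precisely so that it can be invoked as a black box in the proof of Theorem \ref{th-ch}. What you have written is the standard verification, and all three transfers check out: the coin-tossing and Devaney parts use only the identity $\phi\circ f^{n}=g^{n}\circ\phi$ together with bijectivity and bicontinuity of $\phi$, while your identification of compactness as the ingredient that upgrades $\phi$ and $\phi^{-1}$ to uniformly continuous maps is exactly what makes the metric ($\liminf$/$\limsup$) conditions of the scrambled set carry over. One small point worth adding, since the paper flags it in a footnote to Theorem \ref{th-ch}: the definition of Li--Yorke chaos in \cite{KiSt-89} additionally requires the scrambled set to be invariant; this also transfers under your construction, since $f(S)\subseteq S$ gives $g(\phi(S))=\phi(f(S))\subseteq\phi(S)$, so no extra work is needed.
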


\noindent
We are now in position to prove the following:
\begin{theorem}\label{th-ch}
Recalling the definition of $w_{\eta}$ in \eqref{eq-eta} and $\mathcal C$ in \eqref{eq-c}, let
\begin{equation}\label{eq-w}
W:=\{w_{\eta}:\eta=(\eta_i)_{i\in\mathbb Z}\in \Sigma_2\}\subset \mathcal C.
\end{equation}
Then $W$ is compact, the map $\psi_1$ in \eqref{eq-psi} restricted to $W$ is a homeomorphism and there exists a conjugacy $\varphi$ between the two-sided Bernoulli shift $\sigma$ on two symbols and $\psi=\psi_1|_W.$
As a consequence, $\psi$ displays the following chaotic features:
\begin{itemize}
\item[$(i)$] $h_{\rm top}(\psi)= h_{\rm top}(\sigma) = \log(2),$
where $h_{\rm top}$ is the topological entropy;

\item[$(ii)$] the map $\psi$ is chaotic in the sense of coin-tossing, Li-Yorke and Devaney; 

\item[$(iii)$] As regards the coin-tossing chaoticity, the map $\psi$ actually displays a stronger property, as the periodic sequences of symbols in $\Sigma_2$ get realized by periodic orbits of $\psi.$ In symbols, this means that whenever $(s_i)_{i\in {\mathbb Z}}\in \Sigma_2$ is a $k$-periodic
sequence (that is, $s_{i+k} = s_i\,,\forall i\in {\mathbb Z}$) for
some $k\geq 1,$ then there exists a corresponding $k$-periodic sequence
$(w_{\eta}^{(i)})_{i\in {\mathbb Z}}\in {W}^{\mathbb Z}$ satisfying
$$w_{\eta}^{(i)} \,\in\, {\mathcal K}_{s_i}\;\;\mbox{ and }\;\, w_{\eta}^{(i+1)} =
\psi(w_{\eta}^{(i)}),\;\; \forall\, i\in {\mathbb Z},$$
where $\mathcal K_i=\{w_{\eta}\in W: \eta_0=i\},\,i=0,1,$ are disjoint and compact.
\end{itemize}
\end{theorem}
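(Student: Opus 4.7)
The plan is to construct an explicit conjugacy $\varphi : \Sigma_2 \to W$ by setting $\varphi(\eta):=w_\eta$, where $w_\eta$ is the function built in \eqref{eq-eta} during the proof of Theorem \ref{th-fs}. All chaotic features $(i)$--$(iii)$ then follow essentially for free from Lemma \ref{pr-ks} once three things are verified: (a) $\varphi$ is a continuous bijection; (b) $\varphi\circ\sigma=\psi\circ\varphi$ as maps from $\Sigma_2$ into $W$; and (c) $W$ is compact and $\varphi$ is a homeomorphism. Part (c) is automatic from (a), since $\Sigma_2$ is compact, $\mathcal C$ is Hausdorff, and a continuous bijection from a compact space onto a Hausdorff space is a homeomorphism; this also immediately upgrades $\psi=\psi_1|_W$ to a homeomorphism, because $\psi=\varphi\circ\sigma\circ\varphi^{-1}$ and $\sigma$ is one.

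For step (a), injectivity is straightforward: on each interval $(\xi+i,\xi+i+1)$ the two branches $x_0$ and $x_1$ are pointwise distinct (one stays ``up'' and the other ``down''), so any two distinct symbolic sequences $\eta\neq\eta'$ disagree on an entire open interval, forcing $w_\eta\neq w_{\eta'}$. For continuity of $\varphi$ in the metric \eqref{eq-rho}, I would observe that if $\eta^{(n)}\to\eta$ in $\Sigma_2$, then for any fixed $N\geq 1$ the sequences $\eta^{(n)}$ and $\eta$ eventually agree on all indices $|i|\leq N$; on the compact interval $[\xi-N,\xi+N+1]$ the functions $w_{\eta^{(n)}}$ and $w_\eta$ then coincide \emph{identically}, which is strictly stronger than the required uniform convergence on compact sets and gives $\rho(w_{\eta^{(n)}},w_\eta)\to 0$.

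The delicate step is (b), which is where the 2-periodicity of $x$ is exploited and which I expect to be the main obstacle because the labels $0$ and $1$ are assigned interval by interval and their ``meaning'' in terms of the fixed solutions $x(\cdot),y(\cdot)$ changes with the parity of the index. Fix $\eta\in\Sigma_2$ and $t\in(\xi+i,\xi+i+1)$. Then $\psi(w_\eta)(t)=w_\eta(t+1)=x_{\eta_{i+1}}(t+1)$, where the branches $x_0,x_1$ on the right-hand side refer to the labelling used on the \emph{next} interval $(\xi+i+1,\xi+i+2)$, whereas $\varphi(\sigma(\eta))(t)=x_{(\sigma\eta)_i}(t)=x_{\eta_{i+1}}(t)$ uses the labelling on the \emph{current} interval. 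The identities $y(s)=x(s+1)$ and $x(s+2)=x(s)$ give $y(s+1)=x(s)$, which swaps which of $x,y$ stays up between consecutive intervals; this swap of solutions exactly compensates the swap of labels, and the two expressions coincide. A careful bookkeeping of this double swap, handling separately the one-intersection case and the labelling prescription given at $t_1+i+1$ in the multi-intersection case, is the only real computation of the proof.

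Once the conjugacy $\varphi$ is in hand, $(i)$ follows because topological entropy is a conjugacy invariant and $h_{\mathrm{top}}(\sigma)=\log 2$ is classical; $(ii)$ follows directly from Lemma \ref{pr-ks} applied to $f=\sigma$ and $g=\psi$; and $(iii)$ is built into the construction, because a $k$-periodic symbol sequence produces, via \eqref{eq-eta}, a $k$-periodic orbit of $\psi$, a fact already observed inside the proof of Theorem \ref{th-fs}. For the coin-tossing sets I would set $\mathcal K_i:=\varphi(\{\eta\in\Sigma_2:\eta_0=i\})$ for $i=0,1$; these are images under a homeomorphism of the two disjoint clopen cylinders in $\Sigma_2$, so they are disjoint and compact in $W$, as required by the statement.
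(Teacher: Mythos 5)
Your proposal is correct and follows essentially the same route as the paper: the same explicit conjugacy $\varphi(\eta)=w_\eta$, the same continuity argument (eventual agreement of the symbol sequences on $|i|\le N$ forces the functions to coincide identically on a compact interval, hence $\rho$-convergence), the same compact-to-Hausdorff argument for $\varphi^{-1}$ and for the compactness of $W$, and the same deduction of $(i)$--$(iii)$ from conjugacy invariance of entropy and Lemma \ref{pr-ks}. Your careful bookkeeping in step (b) is in fact a little more thorough than the paper, which simply asserts $w_{\eta}(\cdot+1)=w_{\sigma(\eta)}(\cdot)$ without comment: you correctly identify that the identity $y(s)=x(s+1)$ combined with the inversion of the ``up''/``down'' roles between consecutive intervals is exactly what makes the two relabellings cancel.
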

\begin{proof}
In order to show that there exists a conjugacy $\varphi$ between $\sigma$ and $\psi,$ let us first check that $\psi(W)\subseteq W.$ Notice that, applying $\psi$ to $w_{\eta}(t)\in W,$ we get $\psi(w_{\eta}(t))=w_{\eta}(t+1)=w_{\sigma(\eta)}(t)$ and this is an element of $W,$ since $\sigma(\eta)\in\Sigma_2.$
Let us now define $\varphi:\Sigma_2\to W$ in the natural way, i.e. as $\varphi(\eta)=w_{\eta}(\cdot).$ Then the diagram  
\begin{equation}\label{diag-comm2}
\begin{diagram}
\node{\Sigma_2} \arrow{e,t}{\sigma} \arrow{s,l}{\varphi}
      \node{\Sigma_2} \arrow{s,r}{\varphi} \\
\node{W} \arrow{e,b}{\psi}
   \node{W}
\end{diagram}
\end{equation}
commutes, since $\varphi(\sigma(\eta))=w_{\sigma(\eta)}(\cdot)=w_{\eta}(\cdot\,+1)=\psi(w_{\eta}(\cdot))=\psi(\varphi(\eta)).$
The fact that $\varphi$ is a bijection directly follows from its definition. The continuity (in fact, uniform continuity) of $\varphi$ comes from the choice of the distances $\hat d$ on $\Sigma_2$ and $\rho$ on $W$ according to \eqref{eq-sd} and \eqref{eq-rho}, respectively. Indeed, given an arbitrary $\varepsilon>0$ we have to find a $\delta>0$ such that, for any $\eta=(\eta_i)_i,\nu=(\nu_i)_i\in \Sigma_2$ with $\hat d(\eta,\nu)<\delta,$ then $\rho(w_{\eta},w_{\nu})<\varepsilon.$ To such aim, let us fix an integer $m>>0$ so that $1/2^m<\varepsilon$ and observe that, in order to have $\rho(w_{\eta},w_{\nu})<\varepsilon,$ it is sufficient to prove that $\vartheta_m=0,$ i.e. $w_{\eta}\equiv w_{\nu}$ on $I_m=[-m,m].$ Namely, if this is the case, $\rho(w_{\eta},w_{\nu})\le 1/2^m<\varepsilon.$ 
Let $m'$ be a positive integer such that $[\xi-m',\xi+m']\supseteq [-m,m].$ Choosing $\delta=1/2^{m'+1},$ we have that $\hat d(\eta,\nu)<\delta$ implies $\eta_i=\nu_i,\,\forall |i|\le m'.$ Hence, $w_{\eta}\equiv w_{\nu}$ holds on $[\xi-m',\xi+m']\supseteq [-m,m]$ \footnote{Actually, $w_{\eta}\equiv w_{\nu}$ on $[\xi-m',\xi+m'+1].$} and thus $\rho(w_{\eta},w_{\nu})<\varepsilon.$ The continuity of $\varphi^{-1}$ follows from the fact that $\varphi$ is a continuous bijection between the compact set $\Sigma_2$ and the Hausdorff space $W.$ Notice that, by the continuity of $\varphi$ and the compactness of $\Sigma_2,$ the set $W=\varphi(\Sigma_2)$ is compact, too, and thus complete.\\
By the conjugacy between $\sigma$ and $\psi,$ we obtain $h_{\rm top}(\psi)= h_{\rm top}(\sigma)$ (see \cite[Theorem 7.12]{Wa-82}). On the other hand, it is a well known fact that $h_{\rm top}(\sigma)=\log(2)$ and this concludes the proof of $(i).$\\
As regards $(ii),$ according to \cite{KiSt-89}, the Bernoulli system is chaotic in the sense of coin-tossing, Devaney and Li-Yorke.
In order to apply Lemma \ref{pr-ks}, we only have to check that $\psi:W\to W$ is a homeomorphism. By its definition and the choice of $W,$ it is immediate to see that it is a bijection on $W$ with inverse $\psi^{-1}(w_{\eta})=w_{\sigma^{-1}(\eta)}$ and again the continuity of $\psi^{-1}$ comes from the fact that $\psi$ is a continuous bijection between the compact set $W$ and the Hausdorff space $W.$ Thus Lemma \ref{pr-ks} allows to reach conclusion $(ii)$ \footnote{We stress that in \cite{KiSt-89} the definition of Li-Yorke chaos requires, besides the classical assumptions \cite{LiYo-75}, the scrambled set to be invariant. Since the Bernoulli shift displays this stronger property \cite{KiSt-89}, by \cite[Proposition 1]{KiSt-89} the same is true also for $\psi.$}.\\
Finally, $(iii)$ directly follows from the definition of the map $\varphi$ which indeed maps periodic sequences of symbols into periodic orbits of $\psi.$\\
This concludes the proof.
\end{proof}
\eop

\section{Acknowledgments}
Many thanks to Professor Zanolin for suggesting me this interesting problem and
for his invaluable help during the preparation of the paper.

\end{document}